\documentclass[11pt]{article}
\usepackage[colorlinks]{hyperref}
\usepackage{mathrsfs,amssymb,amsfonts}
\usepackage{amsmath,amssymb,latexsym,color}
\usepackage[mathscr]{eucal}
\usepackage[numbers,sort&compress]{natbib}
\usepackage{graphicx}
\usepackage{lipsum}
\def\thefootnote{\fnsymbol{footnote}}
\newtheorem{thm}{Theorem}[section]

\newtheorem{lemma}[thm]{Lemma}

\newtheorem{example}[thm]{Example}
\newtheorem{problem}[thm]{Problem}
\newtheorem{remark}[thm]{Remark}

\newtheorem{Notation}[thm]{Notation}

\newcommand{\proof}{{\it Proof.\quad}}
\newcommand{\qed}{\hfill\Box\medskip}
\renewcommand{\thefootnote}{\arabic{footnote}}
\newcommand{\mdi}{{\rm mdim}}
\newcommand{\di}{{\rm dim}}
\newcommand{\gt}{{\rm gt}}
\newcommand{\gpack}{{\rm gpack}}
\newcommand{\ds}{{\rm d}}
\begin{document}
%%%%%%%%%%%%%%%%%%%%%%%%%%%%%%%%%%%%%%%%%%%%%%%%%%%%%%%%
\renewcommand{\abovewithdelims}[2]{\genfrac{[}{]}{0pt}{}{#1}{#2}}

\title{\bf On two problems concerning the geodesic transversal number of graphs}

\author{{Shi Chen$^{a}$,~~Xuanlong Ma$^{a,}$\footnote{Corresponding author},~~Tingzeng Wu$^{b,c}$}
\medskip
\\
{\small\em $^{a}$School of Science, Xi'an Petroleum University, Xi'an 710065, China} \medskip
\\
{\small\em $^{b}$School of Mathematics and Statistics,
Qinghai Minzu University,}
\\
{\small\em Xining, Qinghai 810007, China}\medskip
\\
{\small\em $^{c}$Qinghai Institute of Applied Mathematics, Xining, Qinghai 810007, China}
}
\date{}
\maketitle
%%%%%%%%%%%%%%%%%%%%%%%%%%%%%%%%%%%%%%%%%%%%%%%%%%%%%%%%
\newcommand\blfootnote[1]{%
\begingroup
\renewcommand\thefootnote{}\footnote{#1}%
\addtocounter{footnote}{-1}%
\endgroup
}
%%%%%%%%%%%%%%%%%%%%%%%%%%%%%%%%%%%%%%%%%%%%%%%%%%%%%%%%
\begin{abstract}
We establish sharp upper and lower bounds on the geodesic transversal number of hypercubes. We also obtain a lower bound on the geodesic transversal number of Hamming graphs.
As applications, we provide negative answers to two open problems concerning the geodesic transversal number.

\medskip
\noindent {\em Key words:} Geodesic transversal number; Mixed metric dimension; Geodesic packing number

\medskip
\noindent {\em 2020 MSC:} 05C12; 05C38
\end{abstract}
%%%%%%%%%%%%%%%%%%%%%%%%%%%%%%%%%%%%%%%%%%%%%%%%%%%%%%%%
\blfootnote{E-mail addresses: nuoyoushi@gmail.com (Shi Chen), xuanlma@xsyu.edu.cn (Xuanlong Ma), mathtzwu@163.com (Tingzeng Wu)}
%%%%%%%%%%%%%%%%%%%%%%%%%%%%%%%%%%%%%%%%%%%%%%%%%%%%%%%%
\section{Introduction}
%%%%%%%%%%%%%%%%%%%%%%%%%%%%%%%%%%%%%%%%%%%%%%%%%%%%%%%%
Every graph considered in this paper is finite, simple, and connected. Let $G$ be a graph.
We always use $V(G)$ and $E(G)$ to denote the vertex set and edge set of $G$, respectively.
Given two vertices $u$ and $v$ in $G$, the distance between
$u$ and $v$ is denoted by $\ds_G(u,v)$, or simply by $\ds(u,v)$ when no confusion can arise.
For two distinct vertices $x,y\in V(G)$, we say a vertex $v\in V(G)$
{\em distinguishes} $x$ and $y$ whenever
$
\ds(v,x)\ne \ds(v,y).
$
A subset $S\subseteq V(G)$ is called a {\em metric generator} of $G$ if for every
pair of distinct vertices $x,y\in V(G)$, there exists a vertex in $S$ such that it distinguishes $x$ and $y$.
A metric generator of $G$ with minimum cardinality is called
a {\em metric basis}
of $G$, and the cardinality of a metric basis is referred to as the {\em metric dimension} of $G$.
In the 1970s, the concept of a metric dimension was introduced independently by Harary and Melter \cite{Ham}, and Slater \cite{Sl}.
The metric dimension of a graph has a wide range of applications, including robot navigation in networks, chemistry, pattern recognition and image processing, and so on (see \cite{Ti}).

Given a vertex $v\in V(G)$ and an edge $e=\{u,w\}\in E(G)$, the {\it distance} $\ds(v,e)$ between $v$ and $e$ is defined by
$
\ds(v,e)=\min\{\ds(v,u),\ds(v,w)\}.
$
Let
$
O(G)=V(G)\cup E(G).
$
We say a vertex $v$ of $G$ {\it distinguishes} two distinct elements $x,y\in O(G)$ (vertices or edges) if
$
\ds(v,x)\ne \ds(v,y).
$
For a subset $S$ of $V(G)$, we say $S$ is a
{\em mixed metric generator} of $G$ provided that, for every pair of distinct elements $x,y\in O(G)$, there exists a vertex in $S$ such that it distinguishes $x$ and $y$.
The cardinality of a mixed metric generator with minimum size, denoted by $\mdi(G)$, is called the {\em mixed metric dimension} of $G$.
Kelenc et al. \cite{KeK} first introduced the concept of a mixed metric dimension. In \cite{KeK}, it was proved that computing the mixed metric dimension of a general graph is NP-hard.

Let $x,y\in V(G)$.
A {\it geodesic} is a shortest path between a pair of vertices in a graph. In particular, if $P$ is a shortest path from $x$ to $y$,
then $P$ is called a $x-y$ {\it geodesic}.
A geodesic $P$ in a graph is said to be {\it maximal} if
there exists no other geodesic in $G$, which contains $P$ as a sub-path. A subset $S$ of $V(G)$ is a {\it geodesic transversal} of $G$ provided that any maximal geodesic in $G$ contains at least one vertex in $S$. The {\it geodesic transversal number} of $G$, denoted by gt($G$), is
the minimum cardinality of a geodesic transversal of $G$.
The geodesic transversal number of a graph is also known as the {\it maximal shortest paths cover number} of a graph;
see, for example, \cite{Pe}.
The geodesic transversal number of join and lexicographic product of graphs was studied in \cite{PeS}
A vertex subset $S$ is called a {\it \gt-set} of $G$ if $S$ is a geodesic transversal of
$G$ such that $|S|=\gt(G)$. The
geodesic-transversal problem is NP-complete \cite{Man}.
A {\it geodesic packing} of $G$ is a set of vertex-disjoint maximal geodesics. The maximum
cardinality of a geodesic packing of $G$ is called the {\it geodesic packing number} of $G$, and is denoted by $\gpack(G)$.
In fact, the geodesic transversal problem and the geodesic packing problem are dual min-max invariant problems.
It is proved in \cite{Mang} that the
decision version of the geodesic packing number is NP-complete.

In \cite{Hua}, the authors obtained some results on the mixed metric dimension and geodesic transversal number of a graph.
They showed that $\gt(G) \le \mdi(G)$ holds for any tree, unicyclic graph, and block graph. Thus, they posed the following problem.

\begin{problem}{\rm (\cite[Problem 5.1]{Hua})}\label{ques-1}
Does $\gt(G) \le \mdi(G)$ hold for every connected graph $G$?
\end{problem}

In \cite{Mang}, it was proved that $\gpack(G)\le \gt(G)$ for any graph $G$, and the upper bound is sharp. Moreover,  $\gt(G)$ can be arbitrarily bigger than $\gpack(G)$; see, for example, $\gt(K_n)=n-1$ and $\gpack(K_n)=\lfloor \frac{n}{2}\rfloor$.
Clearly, in $K_{n,n}$ ($n\ge 2$), any maximal geodesic has length $2$. Thus, $\gpack(K_{n,n})=\lfloor \frac{2n}{3}\rfloor$ and $\gt(K_{n,n})=n$. As a result,
$\gt(K_{n,n})/\gpack(K_{n,n})$ is less than or equal to an absolute constant.
For any graph $G$, the following problem was posed.

\begin{problem}{\rm \cite[Problem 2.5]{Mang}}\label{ques-2}
Is there an absolute constant $C$ such that
$\frac{\gt(G)}{\gpack(G)}\le C$, for all graphs $G$?
\end{problem}

In this paper, we establish sharp upper and lower bounds on the geodesic transversal number of hypercubes. We also obtain a lower bound on the geodesic transversal number of Hamming graphs.
As applications, we give negative answers to Problems~\ref{ques-1} and \ref{ques-2}.
%%%%%%%%%%%%%%%%%%%%%%%%%%%%%%%%%%%%%%%%%%%%%%%%%%%%%%%%
\section{Preliminaries}

The \(n\)-dimensional hypercube \(Q_n\) has vertex set
\[
V(Q_n)=\{0,1\}^n=\{(x_1,x_2,\ldots,x_n):x_i\in\{0,1\}
\text{ for any } 1\le i\le n\}.
\]
Namely, every vertex of \(Q_n\) is an \(n\)-tuple whose coordinate is  either \(0\) or \(1\). Two vertices in \(Q_n\) are adjacent if and only if they differ in exactly one coordinate.
Let $x=(x_1,x_2,\ldots,x_n)\in V(Q_n)$.
The complement of $x$ is denoted by $\overline{x}$, that is  $$\overline{x}=(1-x_1,1-x_2,\ldots,1-x_n).$$
Moreover, we write $|x|=\sum_{i=1}^nx_i$, that is, $|x|$ denotes the number of $1$-entries in the vector $x$. For two vertices $x,y$ of $V(Q_n)$, let $x\oplus y$ denote their bitwise addition modulo \(2\). For example, if $x=(1,0,1,0)$ and $y=(1,1,1,0)$, then
$x\oplus y=(0,1,0,0)$. For any $S\subseteq V(Q_n)$ and $x\in V(Q_n)$, define
$$
S\oplus x=\{s\oplus x: s\in S\}.
$$
It is easy to see that $|S\oplus x|=|S|$.

By the basic properties of $Q_n$, the following is valid.

\begin{remark}\label{rem-1}
A path $P=(a,a_1,\ldots,a_n)$ of \(Q_n\) is a maximal geodesic if and only if $\overline{a}=a_n$, and each coordinate changes exactly once along $P$. More precisely, let \(x=(x_1,\ldots,x_n)\in V(Q_n)\), and let \(\pi\) be a permutation of \(\{1,\ldots,n\}\). Define \[
x^{(0)}=x,~~
x^{(k)}
=x\oplus e_{\pi(1)}\oplus\cdots
\oplus e_{\pi(k)},~~1\leq k\leq n,
\]
where \(e_i\) is the \(i\)-th standard unit vector.
We have that
\[
P=(x^{(0)},x^{(1)},\ldots,x^{(n)})
\]
is a maximal geodesic of $Q_n$.
\end{remark}

For \(k\ge 1\) and \(n\ge 2\), Hamming graph \(H(k,n)\) is the graph with vertex set
$$\{(x_1,x_2,\ldots,x_k): x_i\in \{0,1,2,\ldots,n-1\}\} \text{ for each $1\le i \le k$}\},$$
in which two vertices are adjacent if and only if they differ in exactly one coordinate.
In general, we denote $V(H(k,n))$ by \(\{0,1,\ldots,n-1\}^{k}\). Thus, $H(k,n)$
has \(n^k\) vertices and is \(k(n-1)\)-regular. The distance between two vertices equals the number of coordinates in which they differ. Consequently, \(H(k,n)\) has diameter \(k\). Moreover, every maximal geodesic of \(H(k,n)\) has length \(k\) and contains \(k+1\) vertices. In particular, \(H(k,2)\) is the \(k\)-dimensional hypercube \(Q_k\).

The following result will be used in the proof of the subsequent  theorems.

\begin{lemma}\label{nlem-0}
Let \(A_0,A_1,\ldots,A_m\) be $m+1$ finite sets, and let
$
U=\bigcup_{j=0}^{m}A_j.
$
For any \(x\in U\), let
$
I_x=\{j\in\{0,1,\ldots,m\}:x\in A_j\},
$
and let \(r_x\) be the number of maximal intervals of consecutive integers in \(I_x\). Then
\[
\sum_{j=0}^{m}|A_j|-|U|-\sum_{j=1}^{m}|A_{j-1}\cap A_j|
=\sum_{x\in U}(r_x-1).
\]
In particular, we have $$|U|+\sum_{j=1}^{m}|A_{j-1}\cap A_j|\le \sum_{j=0}^{m}|A_j|.$$
\end{lemma}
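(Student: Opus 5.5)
We will prove the identity by double counting: each of the three sums on the left-hand side is rewritten as a sum over the elements $x\in U$ of a quantity depending only on the index set $I_x$. Then the identity reduces to a statement about a single finite set of integers $I_x\subseteq\{0,1,\ldots,m\}$.

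The rewrites are as follows.
\begin{itemize}
\item We have $\sum_{j=0}^{m}|A_j|=\sum_{x\in U}|I_x|$, since each $x$ is counted once for every $j$ with $x\in A_j$.
\item We have $|U|=\sum_{x\in U}1$.
\item We have $\sum_{j=1}^{m}|A_{j-1}\cap A_j|=\sum_{x\in U}c_x$, where
\[
c_x=\bigl|\{j\in\{1,\ldots,m\}: j-1\in I_x \text{ and } j\in I_x\}\bigr|
\]
counts the pairs of consecutive integers both lying in $I_x$.
\end{itemize}
Hence the left-hand side equals $\sum_{x\in U}\bigl(|I_x|-1-c_x\bigr)$, and it suffices to show that $|I_x|-c_x=r_x$ for every $x\in U$.

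For this combinatorial fact, fix $x$ and decompose $I_x$ into its maximal intervals $J_1,\ldots,J_{r_x}$ of consecutive integers. A pair $\{j-1,j\}$ with both members in $I_x$ must lie inside a single interval $J_t$, because consecutive integers belonging to $I_x$ cannot be split across two maximal intervals. An interval with $\ell$ elements contains exactly $\ell-1$ such pairs. Summing over $t$ gives
\[
c_x=\sum_{t=1}^{r_x}\bigl(|J_t|-1\bigr)=|I_x|-r_x,
\]
which is what we need. Note that $I_x\neq\emptyset$ because $x\in U$, so $r_x\ge 1$.

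The ``in particular'' inequality then follows immediately: since $r_x\ge 1$ for every $x\in U$, the right-hand side $\sum_{x\in U}(r_x-1)$ is nonnegative. No step is a real obstacle. The only point needing care is the claim that consecutive pairs of $I_x$ never straddle two maximal intervals, and this follows directly from maximality.
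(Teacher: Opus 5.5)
Your proof is correct and takes the same route as the paper. Both rewrite the three sums as sums over $x\in U$ of $|I_x|$, $1$, and the number of consecutive pairs in $I_x$, and then use that the $r_x$ maximal intervals of $I_x$ contain exactly $|I_x|-r_x$ such pairs (your remark that such pairs cannot straddle two maximal intervals makes explicit a point the paper leaves implicit).
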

\proof
For any \(x\in U\), we have $r_x\ge 1$. Set \(k_x=|I_x|\). Clearly, \(k_x\geq 1\), and
$
\sum_{j=0}^{m}|A_j|=\sum_{x\in U}k_x.
$
Now, consider the sum
$
\sum_{j=1}^{m}|A_{j-1}\cap A_j|.
$
For fixed \(x\in U\), its contribution to this sum equals the number of indices \(j\) such that both \(j-1\) and \(j\) belong to \(I_x\).
Since \(I_x\) consists of \(r_x\) maximal intervals of consecutive integers and contains \(k_x\) integers in total, the number of such adjacent pairs is \(k_x-r_x\). Indeed, an interval containing \(\ell\) integers contributes exactly \(\ell-1\) adjacent pairs. Summing over all \(r_x\) intervals gives \(k_x-r_x\). As a result, we have that
\[
\sum_{j=1}^{m}|A_{j-1}\cap A_j|
=\sum_{x\in U}(k_x-r_x).
\]
It follows that
\[
\sum_{j=0}^{m}|A_j|-|U|
-\sum_{j=1}^{m}|A_{j-1}\cap A_j|=\sum_{x\in U}k_x-\sum_{x\in U}1
-\sum_{x\in U}(k_x-r_x)
=\sum_{x\in U}(r_x-1),
\]
as desired. $\qed$

\section{The geodesic transversal number of hypercubes}
The main result of this section is the following theorem:
\begin{thm}\label{qn-gt}
For any $n\ge 2$, we have
$$
\left\lceil \frac{2^n}{\lceil n/2\rceil+1} \right\rceil\le \gt(Q_n)\le\binom{n}{\lfloor n/2\rfloor}.
$$
\end{thm}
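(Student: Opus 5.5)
\emph{Upper bound.} I would take $S=\{x\in V(Q_n): |x|=\lfloor n/2\rfloor\}$, the middle layer, which has exactly $\binom{n}{\lfloor n/2\rfloor}$ vertices, and show it is a geodesic transversal. By Remark~\ref{rem-1}, a maximal geodesic $P=(x^{(0)},\dots,x^{(n)})$ runs from $a=x^{(0)}$ to $\overline{a}=x^{(n)}$, and consecutive weights differ by exactly $1$. The endpoint weights are $|a|$ and $n-|a|$, and one of these is $\le\lfloor n/2\rfloor$ while the other is $\ge\lceil n/2\rceil\ge\lfloor n/2\rfloor$. A discrete intermediate value argument along $P$ then gives a vertex of weight exactly $\lfloor n/2\rfloor$. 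This part is routine.

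\emph{Lower bound.} Let $S$ be a $\gt$-set. I would fix one maximal geodesic $P=(p_0,\dots,p_n)$, say with $p_0=0$ and $p_j=e_1\oplus\cdots\oplus e_j$. Since $v\mapsto v\oplus w$ is an automorphism of $Q_n$, every translate $P\oplus v$ is again a maximal geodesic, so it meets $S$. Hence $v\in S\oplus p_j$ for some $j$. Put $A_j=S\oplus p_j$. Then $U=\bigcup_j A_j=V(Q_n)$ and $|A_j|=|S|$, and Lemma~\ref{nlem-0} gives
\[
2^n+\sum_{j=1}^{n}|A_{j-1}\cap A_j|\le (n+1)|S|.
\]
The target $(\lceil n/2\rceil+1)|S|\ge 2^n$ therefore reduces to showing $\sum_{j=1}^{n}|A_{j-1}\cap A_j|\ge\lfloor n/2\rfloor\,|S|$, or to bounding the error term $\sum_x(r_x-1)$ from the lemma directly. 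Here $|A_{j-1}\cap A_j|=|S\cap(S\oplus e_j)|$, which is twice the number of edges of $S$ in direction $j$.

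\emph{Main obstacle.} The hard step is the inequality $\sum_{j=1}^{n}|A_{j-1}\cap A_j|\ge\lfloor n/2\rfloor\,|S|$, because an arbitrary $\gt$-set need not span many edges. I would use freedom in two places.

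First, I would exploit the freedom in choosing $S$ among minimum transversals. If some $s\in S$ has no neighbour in $S$ in direction $i$, I would try to show that $s$ can be moved, or that the maximal geodesics through $s$ force $S$ to contain enough vertices near $s$. The goal is to rule out configurations where the sets $A_j$ overlap little, i.e. where $r_x$ is large.

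Second, I would use the freedom in choosing $P$ and the order $\pi$ of the coordinate flips. I would average the inequality over all $\pi$, and also over each geodesic together with its reversal (using $p_j\leftrightarrow p_{n-j}$). The aim is to prove that for each $x$ the index set $I_x$ has, on average, at most about $\lfloor n/2\rfloor$ extra intervals. Equivalently, each vertex is covered by at most about $\lceil n/2\rceil+1$ ``essentially distinct'' positions. If the averaging gives this, taking ceilings finishes the proof. If it does not, I would fall back on an explicit pairing of positions $j$ and $j+1$ along $P$ to halve the overcounting.
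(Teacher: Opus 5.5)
Your upper bound is correct and is the paper's middle-layer argument. The lower bound, however, has a genuine gap. Translating one maximal geodesic gives only $2^n+\sum_{j=1}^{n}|A_{j-1}\cap A_j|\le(n+1)|S|$, where $\sum_{j=1}^{n}|A_{j-1}\cap A_j|=\sum_{i=1}^{n}|S\cap(S\oplus e_i)|=2e(Q_n[S])$ is twice the number of edges induced by $S$. The inequality $2e(Q_n[S])\ge\lfloor n/2\rfloor|S|$ that your reduction needs is false already for $n=3$, for every $\gt$-set.

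Indeed, $\gt(Q_3)=3$, so you would need a $\gt$-set of size $3$ inducing two edges. Such a set is an induced path, for instance $\{000,001,011\}$. But the maximal geodesic $(010,110,100,101)$ avoids it. All induced $3$-vertex paths in $Q_3$ are equivalent under translations and coordinate permutations, so no $\gt$-set of $Q_3$ qualifies. (Likewise, the middle layer of $Q_4$ is an independent $\gt$-set.)

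None of your remedies repairs this:
\begin{itemize}
\item The quantity $2e(Q_n[S])$ does not depend on the flip order $\pi$ or on reversing the geodesic, so averaging returns the same inequality.
\item Re-choosing $S$ is useless when no $\gt$-set works.
\item By the identity in Lemma~\ref{nlem-0} with $U=V(Q_n)$, namely $(n+1)|S|-2^n-2e(Q_n[S])=\sum_x(r_x-1)$, bounding $\sum_x(r_x-1)$ ``directly'' is just a restatement of the target inequality.
\end{itemize}

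What is missing is to use, for each $x$, a whole family of maximal geodesics at once. Then the intersection terms land on the covering side, rather than having to be large.

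Let $m=\lfloor n/2\rfloor$ and $q_i=e_1\oplus\cdots\oplus e_{2i}$ for $0\le i\le m$. Consider the $2^m$ maximal geodesics from $x$ that pass through $x\oplus q_0,\dots,x\oplus q_m$ and, between $x\oplus q_{j-1}$ and $x\oplus q_j$, flip coordinates $2j-1$ and $2j$ in either order. For odd $n$, finish by flipping coordinate $n$. Since all of these geodesics meet $S$, one of two things happens:
\begin{itemize}
\item some $x\oplus q_i$ lies in $S$ (or, for odd $n$, $\overline{x}$ does); or
\item for some $j$, both middle vertices $x\oplus q_{j-1}\oplus e_{2j-1}$ and $x\oplus q_{j-1}\oplus e_{2j}$ lie in $S$.
\end{itemize}
With $A_i=S\oplus q_i$, the second case says exactly that $x\oplus e_{2j-1}\in A_{j-1}\cap A_j$. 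Thus $2^n\le|\bigcup_iA_i|+\sum_{j=1}^{m}|A_{j-1}\cap A_j|$. Lemma~\ref{nlem-0} bounds the right-hand side by $(\lceil n/2\rceil+1)|S|$, because only $\lceil n/2\rceil+1$ translates occur; for odd $n$ the extra one is $S\oplus(e_1\oplus\cdots\oplus e_n)$.

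Your closing remark about pairing positions $j$ and $j+1$ points in this direction. However, the essential step is absent from your proposal: the two alternative orders within each pair, and the resulting intersection terms on the covering side.
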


As an application of the above result, we will give a negative answer to Problem~\ref{ques-1}. We begin by proving the following lemma.

\begin{lemma}\label{lem-1}
For any $n\ge 1$, we have that
$\gt(Q_n)\le \binom{n}{\lfloor n/2\rfloor}$.
\end{lemma}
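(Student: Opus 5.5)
The plan is to exhibit an explicit geodesic transversal of size $\binom{n}{\lfloor n/2\rfloor}$, namely the middle layer
$$
S=\{x\in V(Q_n): |x|=\lfloor n/2\rfloor\}.
$$
Clearly $|S|=\binom{n}{\lfloor n/2\rfloor}$, so it suffices to show that every maximal geodesic of $Q_n$ meets $S$. The case $n=1$ is trivial: $Q_1=K_2$, and the only maximal geodesic is the edge itself, which contains the vertex with $|x|=0=\lfloor 1/2\rfloor$.

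For $n\ge 2$, we invoke Remark~\ref{rem-1}. Every maximal geodesic has the form $P=(x^{(0)},x^{(1)},\ldots,x^{(n)})$, where $x^{(n)}=\overline{x^{(0)}}$ and consecutive vertices differ in exactly one coordinate. Two facts follow:
\begin{itemize}
\item the weight changes by exactly one at each step, so $\bigl||x^{(k)}|-|x^{(k-1)}|\bigr|=1$ for $1\le k\le n$;
\item the endpoint weights are complementary, so $|x^{(0)}|+|x^{(n)}|=n$.
\end{itemize}
Hence one of $|x^{(0)}|,|x^{(n)}|$ is at most $\lfloor n/2\rfloor$. The other is then at least $n-\lfloor n/2\rfloor=\lceil n/2\rceil\ge\lfloor n/2\rfloor$.

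The integer sequence $|x^{(0)}|,|x^{(1)}|,\ldots,|x^{(n)}|$ moves in steps of $\pm1$ between a value $\le\lfloor n/2\rfloor$ and a value $\ge\lfloor n/2\rfloor$. By this discrete intermediate value argument it must hit $\lfloor n/2\rfloor$ exactly, so some $x^{(k)}$ lies in $S$. Therefore $S$ is a geodesic transversal, and $\gt(Q_n)\le|S|=\binom{n}{\lfloor n/2\rfloor}$.

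There is no real obstacle here. The only point needing care is relying on Remark~\ref{rem-1} to ensure that the endpoints of any maximal geodesic are antipodal, and hence have complementary weights.
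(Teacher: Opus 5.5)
Your proof is correct and follows the paper's argument: take the middle layer $\{x:|x|=\lfloor n/2\rfloor\}$, use Remark~\ref{rem-1} to see that a maximal geodesic has antipodal endpoints with weights summing to $n$, and apply the discrete intermediate value argument to the weight sequence, which changes by $\pm1$ at each step. Your separate treatment of $n=1$ is harmless but unnecessary, since the same argument covers that case.
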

\proof
Let
$$
L=\left\{x\in V(Q_n): |x|=\left\lfloor \frac{n}{2}\right\rfloor\right\}.
$$
Then $|L|=\binom{n}{\lfloor n/2\rfloor}$.
It suffices to prove that $L$ is a geodesic transversal. Now let
$$P=(v_0, v_1, \ldots, v_n)$$
be a maximal geodesic of $Q_n$. Thus, by Remark~\ref{rem-1}, we see that  $\overline{v_n}=v_0$. Set $|v_0|=r$. Then $|v_n|=n-r$.
Now without loss of generality, assume that $r\le n-r$. Then
$$
|v_0|\le \frac{n}{2},~~|v_n|\ge \frac{n}{2}.
$$
Note that, by Remark~\ref{rem-1}, each coordinate changes exactly once along $P$. It follows that
$$
\big||v_{i}|-|v_{i-1}|\big|=1,~~ 1\le i\le n.
$$
As a result, there exists at least one index $l$ in $\{0,1,2,\ldots,n\}$ such that $|v_l|=\lfloor n/2\rfloor$. It follows that $v_l\in L$, and so $L$ is a geodesic transversal, as desired.
$\qed$

\begin{lemma}\label{lem-2}
For any $n\ge 2$, we have that
$\gt(Q_n)\ge \left\lceil \frac{2^n}{\lceil n/2\rceil+1} \right\rceil$.
\end{lemma}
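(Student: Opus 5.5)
The plan is a double-counting argument: translate $S$ along one maximal geodesic and apply Lemma~\ref{nlem-0} to the translates. Let $S$ be a $\gt$-set of $Q_n$. Fix the identity ordering of coordinates and write $p_j=e_1\oplus\cdots\oplus e_j$ for $0\le j\le n$, so $p_0=0$ and $p_n=(1,\ldots,1)$. By Remark~\ref{rem-1}, for every $x\in V(Q_n)$ the path $P_x=(x\oplus p_0,x\oplus p_1,\ldots,x\oplus p_n)$ is a maximal geodesic. Hence $S$ meets $P_x$, i.e. $x\in S\oplus p_j$ for some $j$. Setting $A_j=S\oplus p_j$, this shows $\bigcup_{j=0}^{n}A_j=V(Q_n)$, so $|U|=2^n$ in the notation of Lemma~\ref{nlem-0}, while $\sum_j|A_j|=(n+1)|S|$.

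This alone only gives $\gt(Q_n)\ge 2^n/(n+1)$, so the range of $j$ must be cut roughly in half. The tool is reversal and complementation. Reading $P_x$ backwards gives the maximal geodesic from $\overline{x}$ that flips coordinates in the order $n,n-1,\ldots,1$. Its vertex at position $n-k$ is $x\oplus p_k$, which equals $\overline{x}\oplus q_k$, where $q_k=e_{k+1}\oplus\cdots\oplus e_n$. So a hit of $S$ at position $k>\lceil n/2\rceil$ on $P_x$ is a hit at position $n-k<\lceil n/2\rceil$ on the reversed-order geodesic from $\overline{x}$.

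I would therefore work with two families of geodesics: the forward ones $P_x$ and the backward ones $P'_x$ (flip order $n,\ldots,1$). Set $m=\lceil n/2\rceil$ and arrange a chain of $m+1$ sets built from translates $S\oplus p_j$ and $S\oplus q_j$ with $0\le j\le m$. The choice should make the union still cover $V(Q_n)$: every $x$ either has an $S$-vertex within the first $m$ steps of $P_x$, or the corresponding statement holds for $P'_{\overline{x}}$, which is the same vertex set traversed from the other end. The counting then yields $2^n\le (m+1)|S|$ once the contributions from the two families are shown not to double-count. Where they overlap, I would use the consecutive-intersection term $\sum_j|A_{j-1}\cap A_j|$ of Lemma~\ref{nlem-0}. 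Such an overlap corresponds to pairs $s,s'\in S$ with $s\oplus s'=e_j$, i.e. edges inside $S$, and these can be exploited because one of the two endpoints is redundant on the relevant geodesics.

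The main obstacle is exactly this bookkeeping: making the halved family of translates cover all $2^n$ vertices while keeping the total multiplicity at $(m+1)|S|$ rather than $2(m+1)|S|$. My first attempt would pair each $x$ with $\overline{x}$. For each pair $\{x,\overline{x}\}$, the maximal geodesic $P_x$ has vertex set $\{x\oplus p_k\}$, and among its $n+1$ vertices the $S$-vertex lies within distance $\lceil n/2\rceil$ of $x$ or of $\overline{x}$. Charging each pair to an $S$-vertex, I would then check that each $s\in S$ is charged at most $\lceil n/2\rceil+1$ times per unit of $2^{n}/2$ pairs, using Lemma~\ref{nlem-0} to control the repeats. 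Taking ceilings then gives the lemma.
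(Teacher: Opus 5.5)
There is a genuine gap, and it is structural, not a matter of bookkeeping.

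\textbf{Your two families are one family.} Every geodesic you use is a translate of the single vertex set $P_0=\{p_0,\dots,p_n\}$. The backward geodesic from $x$ (flip order $n,\dots,1$) has $k$-th vertex $x\oplus e_{n-k+1}\oplus\cdots\oplus e_n=\overline{x}\oplus p_{n-k}$, so it is just $P_{\overline{x}}$ reversed. Splitting a hit on $P_x$ by whether it lies in the first or second half only rewrites the cover $V(Q_n)=\bigcup_{j=0}^{n}(S\oplus p_j)$. A hit at position $k>\lceil n/2\rceil$ still puts $x$ in $S\oplus p_k$, so the number of translates stays $n+1$.

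\textbf{No argument confined to these geodesics can work.} In $Q_3$ the set $S=\{000,101\}$ meets every $P_x$, because $P_0\cup(P_0\oplus 101)=\{000,100,110,111\}\cup\{101,001,011,010\}=V(Q_3)$. Yet the lemma asserts $\gt(Q_3)\ge 3$. Likewise $\{0000,0101,1010,1111\}$ meets every $P_x$ in $Q_4$, while $\gt(Q_4)=6$.

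\textbf{Other problems.} Your idea that overlaps are edges $s\oplus s'=e_j$ of $S$ with a redundant endpoint is unfounded. In $Q_2$ the two ends of any edge form a $\gt$-set, and neither can be dropped. The charging in your last paragraph is not a well-defined inequality.

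\textbf{The missing idea} is to use, from each starting vertex $x$, the freedom in the order of the flips. For $n=2m$ the paper flips coordinates in the consecutive pairs $\{2j-1,2j\}$, in either order inside each pair. Put $q_i=e_1\oplus\cdots\oplus e_{2i}$. All $2^m$ such maximal geodesics pass through $x\oplus q_0,\dots,x\oplus q_m$. At position $2j-1$ one may use either $x\oplus q_{j-1}\oplus e_{2j-1}$ or $x\oplus q_{j-1}\oplus e_{2j}$.

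Since $S$ meets all of them, either $x\oplus q_i\in S$ for some $i\le m$, or both options at some position $2j-1$ lie in $S$. With $A_i=S\oplus q_i$, the latter means $x\oplus e_{2j-1}\in A_{j-1}\cap A_j$. Hence $V(Q_n)\subseteq\bigcup_{i=0}^{m}A_i\cup\bigcup_{j=1}^{m}\bigl((A_{j-1}\cap A_j)\oplus e_{2j-1}\bigr)$. Lemma~\ref{nlem-0} then gives $2^n\le\sum_{i=0}^{m}|A_i|=(m+1)|S|$. For $n=2m+1$ one appends a final flip of $e_{2m+1}$ and the extra translate $A_{m+1}=S\oplus(1,\dots,1)$, giving $2^n\le(m+2)|S|$.

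The halving you wanted comes from one fact: a hit at an odd position must use two vertices of $S$ at distance $2$, differing by $e_{2j-1}\oplus e_{2j}$. That is exactly what the term $\sum_j|A_{j-1}\cap A_j|$ in Lemma~\ref{nlem-0} pays for, and it cannot come from a single coordinate order.
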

\proof
Suppose that $S$ is a $\gt$-set of $Q_n$. Let $x\in V(Q_n)$. In the following, we consider two cases for $n$.

\smallskip
{\bf Case 1.} $n=2m$, where $m\ge 1$.
\smallskip

Set
\begin{equation}\label{meq-1}
q_0=0,~~q_j=e_1\oplus e_2 \oplus \dots \oplus e_{2j},~~1\le j \le m,
\end{equation}
where \(e_i\) is the \(i\)-th standard unit vector.
For any $1\le j \le m$, let
\begin{equation}\label{meq-2}
y_i\in T_i:=\{x\oplus q_{j-1}\oplus e_{2j},x\oplus q_{j-1}\oplus e_{2j-1}\}.
\end{equation}
Then by Remark~\ref{rem-1}, we see that
\begin{equation}\label{eq-1}
(x,y_1,x\oplus q_1,y_2,x\oplus q_2,\ldots,y_m,x\oplus q_m)
\end{equation}
is a maximal geodesic. If $x\oplus q_i\notin S$ for each $0\le i \le m$, and
$T_i\nsubseteq S$ for each $1\le j \le m$, then by \eqref{eq-1},
we can choose a maximal geodesic from $x$ to $x\oplus q_m$ such that every vertex in this maximal geodesic does not belong to $S$, which contradicts the definition of $S$.
We conclude that
\begin{equation}\label{eq-2}
x\oplus q_i\in S \text{ for some } 0\le i \le m
\end{equation}
or
\begin{equation}\label{eq-3}
T_j\subseteq S \text{ for some } 1\le j \le m.
\end{equation}
For any $y\in V(Q_n)$, write
$$
A_i=S\oplus q_i,~~0\le i \le m.
$$
Note that \(-1\equiv1\pmod{2}\). Thus, \eqref{eq-2} is equivalent to
$$
x\in \bigcup_{i=0}^m A_i.
$$
Furthermore, since $q_j=q_{j-1}\oplus e_{2j-1}\oplus e_{2j}$ for each $1\le j \le m$, we deduce that \eqref{eq-3} is equivalent to
$$
x\in (A_{j-1}\cap A_j)\oplus e_{2j-1} \text{ for some } 1\le j \le m.
$$
Now, in either \eqref{eq-2} or \eqref{eq-3}, it follows that
$$
V(Q_n)\subseteq \left(\bigcup_{j=0}^mA_j\right)\cup \left(\bigcup_{i=1}^m((A_{i-1}\cap A_i)\oplus e_{2i-1})\right),
$$
and hence, we have
$$
2^{2m}\le \left|\bigcup_{j=0}^mA_j\right|+\sum_{i=1}^m|A_{i-1}\cap A_i|.
$$
In view of Lemma~\ref{nlem-0}, we deduce that
$
2^{2m}\le \sum_{j=0}^{m}|A_j|=(m+1)|S|,
$
which implies that
$
|S|\ge \frac{2^{2m}}{m+1}.
$
In other words, we obtain
\begin{equation}\label{eq-4}
\gt(Q_{2m})\ge \left\lceil\frac{2^{2m}}{m+1}\right\rceil.
\end{equation}

\smallskip
{\bf Case 2.} $n=2m+1$, where $m\ge 1$.
\smallskip

Assume first \eqref{meq-1} and \eqref{meq-2}.
Then, according to Remark~\ref{rem-1}, we infer that
$$
(x,y_1,x\oplus q_1,y_2,x\oplus q_2,\ldots,y_m,x\oplus q_m,x\oplus q_m\oplus e_{2m+1})
$$
is a maximal geodesic. Let
$$
A_i=S\oplus q_i, ~~ A_{m+1}=S\oplus\sum_{j=1}^ne_j,~~0\le i \le m.
$$
Then, similar to the proof of Case 1, we conclude that
\begin{equation}\label{eq-5}
2^{2m+1}\le \left|\bigcup_{j=0}^{m+1}A_j\right|+\sum_{i=1}^m|A_{i-1}\cap A_i|.
\end{equation}
Combining \eqref{eq-5} and Lemma~\ref{nlem-0}, we obtain
$$
2^{2m+1}\le \left|\bigcup_{j=0}^{m+1}A_j\right|+\sum_{i=1}^m|A_{i-1}\cap A_i|+|A_{m}\cap A_{m+1}|\le \sum_{j=0}^{m+1}|A_j|=(m+2)|S|.
$$
As a result,
\begin{equation}\label{eq-6}
\gt(Q_{2m+1})\ge \left\lceil\frac{2^{2m+1}}{m+2}\right\rceil.
\end{equation}

Finally, combining \eqref{eq-4} and \eqref{eq-6}, we have
$\gt(Q_n)\ge \left\lceil \frac{2^n}{\lceil n/2\rceil+1} \right\rceil$, as desired.
$\qed$

Now, Theorem~\ref{qn-gt} follows from Lemmas~\ref{lem-1} and \ref{lem-2}.

\begin{remark}
It is straightforward to verify that $\gt(Q_2)=2$ and $\gt(Q_3)=3$. Thus, both the upper and lower bounds in Theorem~\ref{qn-gt} are sharp. In fact, by Theorem~\ref{qn-gt}, we also can obtain that $\gt(Q_2)=2$,  $\gt(Q_3)=3$, and $\gt(Q_4)=6$.
\end{remark}

The following theorem indicates that the geodesic transversal number and the mixed metric dimension can be arbitrarily separated in either direction over the class of connected graphs.

\begin{thm}\label{thm-b}
Let \(\mathcal G\) denote the class of all finite connected graphs with at least two vertices. Then there exist connected graphs $G,H$ such that
\begin{equation}\label{eq-7}
\sup_{G\in\mathcal G}
\frac{\operatorname{gt}(G)}{\operatorname{mdim}(G)}
=\infty,
\qquad
\sup_{H\in\mathcal G}
\frac{\operatorname{mdim}(H)}{\operatorname{gt}(H)}
=\infty.
\end{equation}
\end{thm}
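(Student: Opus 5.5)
For the first supremum I would take the hypercubes $G=Q_n$. Lemma~\ref{lem-2} gives $\gt(Q_n)\ge 2^n/(\lceil n/2\rceil+1)$, which grows exponentially. So it suffices to show that $\mdi(Q_n)$ is at most polynomial in $n$, and in fact $O(n)$ should be achievable.

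The plan for this upper bound is to build an explicit mixed metric generator of size about $2n$ (or $n+1$). I would try the set
\[
S=\{0\}\cup\{e_1,\ldots,e_n\}\cup\{\overline{e_1},\ldots,\overline{e_n}\}.
\]
For a vertex $v$ one has $\ds(e_i,v)=|v|\pm1$ according to $v_i$. Hence $0$ together with the $e_i$ recovers $v$ completely.

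For an edge $f=\{u,u\oplus e_k\}$ with $|u|=t$ and $u_k=0$, we get $\ds(0,f)=t$ and $\ds(e_i,f)=\min(\ds(e_i,u),\ds(e_i,u\oplus e_k))$. This equals $t-1$ if $u_i=1$, equals $t$ if $i=k$, and equals $t+1$ otherwise. So these distances recover $u$ and the direction $k$, and therefore the edge.

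The remaining task is to separate a vertex from an edge, and this is where I expect the main difficulty.
\begin{itemize}
\item A vertex $v$ with $|v|=t$ has all $\ds(e_i,v)\in\{t-1,t+1\}$.
\item An edge $f$ at level $t$ has exactly one $e_k$ with distance $t$.
\end{itemize}
So $\{0,e_1,\ldots,e_n\}$ already distinguishes every vertex from every edge. Edges at different levels are separated by $0$, vertices at different weights are also separated by $0$, and a vertex cannot be confused with an edge because of the parity pattern just described.

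The edge cases $t=0$ and $t=n$ need a quick check. For example, the edge $\{0,e_k\}$ has $\ds(e_i,f)=1$ for $i\ne k$ and $0$ for $i=k$, whereas the vertex $e_k$ has distances $2$ and $0$. These are distinct, so this case is fine, and I would verify the others directly. This would give $\mdi(Q_n)\le n+1$, and so $\gt(Q_n)/\mdi(Q_n)\to\infty$. If this verification fails, the fallback is to add the antipodal vertices $\overline{e_i}$ to $S$, which still gives $O(n)$.

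For the second supremum I would use complete graphs $H=K_n$. Here $\gt(K_n)=n-1$, while $\mdi(K_n)=n$, so that ratio is only about $1$ and does not work. Stars are a better candidate. In $K_{1,n}$ every maximal geodesic is a leaf–center–leaf path, so $\gt(K_{1,n})=1$ (take the center). On the other hand, a mixed metric generator must contain all leaves except possibly one, since two edges at the center are separated only by their leaf endpoints. Thus $\mdi(K_{1,n})\ge n-1$, and the ratio is unbounded. This part is routine, so the main obstacle remains verifying the mixed generator for $Q_n$, in particular the vertex-versus-edge comparisons at the boundary levels.
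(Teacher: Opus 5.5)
Your proof is correct and uses the same two families as the paper (hypercubes and stars). The difference is that you establish the bounds on the mixed metric dimension by hand, whereas the paper cites them. The paper bounds $\mdi(Q_n)$ by invoking $\mdi(Q_n)=\di(Q_n)$ for $n\ge 3$ from \cite{KeT} together with $\di(Q_n)\le n$ from \cite{Ca}, and it quotes $\mdi(K_{1,n})=n$ from \cite{KeK} and $\gt(K_{1,n})=1$ from \cite{Man}.

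You instead verify directly that $\{0,e_1,\ldots,e_n\}$ is a mixed metric generator, and your computation is already complete, so the hedging is unnecessary. The formulas $\ds(e_i,v)=|v|\pm1$ and the edge distances $t-1,t,t+1$ hold uniformly for every vertex and every level $0\le t\le n-1$. A vertex and an edge with the same distance $t$ to $0$ are always separated by the unique $e_k$ at distance $t$ from the edge. Hence no separate boundary checks and no fallback with the $\overline{e_i}$ are needed.

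Likewise, your observation about stars settles that case. In a star with center $c$, two edges $\{c,\ell_1\},\{c,\ell_2\}$ are distinguished only by $\ell_1$ or $\ell_2$. This gives $\mdi(K_{1,n})\ge n-1$, which is all the limit requires.

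Your route is self-contained and avoids the nontrivial identity $\mdi(Q_n)=\di(Q_n)$, at the cost of the bound $n+1$ instead of $n$. That cost is irrelevant for the theorem. It also does not affect the negative answer to Problem~\ref{ques-1}, since Theorem~\ref{qn-gt} still gives $n+1<\gt(Q_n)$ for $n\ge 4$. The paper's citations give sharper values but rest on external results.
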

\proof
Consider first $Q_n$ for $n\ge 3$. Then, it follows from
\cite[Corollary 2.8]{KeT} that $\mdi(Q_n)=\di(Q_n)$.
Thus, by \cite[Sect. 2]{Ca}, we obtain
\begin{equation}\label{eq-11}
\mdi(Q_n)=\di(Q_n)\le n.
\end{equation}
Moreover, in view of Theorem~\ref{qn-gt}, we have $\gt(Q_n)\ge \frac{2^n}{n+1}$.
This forces that
\begin{equation}\label{eq-8}
\frac{\operatorname{gt}(Q_n)}{\operatorname{mdim}(Q_n)}
\ge \frac{2^n}{n(n+1)}\longrightarrow \infty.
\end{equation}
Next we consider $K_{1,n}$. By \cite[Proposition 3.2]{Man}, we have $\gt(K_{1,n})=1$.
Furthermore, it follows from \cite[Theorem 4.3]{KeK} that $\mdi(K_{1,n})=n$. Thus, we deduce
$$
\frac{\mdi(K_{1,n})}{\gt(K_{1,n})}
=n \longrightarrow \infty.
$$
We conclude that \eqref{eq-7} holds, as desired.
$\qed$

Now, we give a negative answer to Problem~\ref{ques-1}.
By Theorem~\ref{thm-b}, we have $\gt(Q_n) > \mdi(Q_n)$ for all sufficiently large $n$.
In fact, by Theorem~\ref{qn-gt} and \eqref{eq-11}, we have
$$
\mdi(Q_n)\le n< \gt(Q_n),~~n\ge 4,
$$
as desired.

\section{The geodesic transversal
number of Hamming graphs}

For convenience, in the following, let
$$
G_{k,n}=H(k,n),~~[n]=\{0,1,2,\ldots,n-1\},~~V(G_{k,n})=[n]^k.
$$
In this section, we mainly prove the following theorem:

\begin{thm}\label{gt-ham}
For integers \(k\ge 1\) and \(n\ge 2\), we have
\begin{equation}\label{2-eq-10}
\gt(G_{k,n})\ge n^k-O_k\cdot n^{k-2^{1-k}},
\end{equation}
where $O_k$ is a positive constant depending only on $k$.
\end{thm}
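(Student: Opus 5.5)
The plan is to pass to the complement. Let $S$ be a $\gt$-set of $G_{k,n}$ and put $T=[n]^k\setminus S$. Then $\gt(G_{k,n})=n^k-|T|$, and $T$ contains no maximal geodesic, since every maximal geodesic meets $S$. So it suffices to show the following: any set $T\subseteq[n]^k$ with $|T|> O_k\cdot n^{k-2^{1-k}}$ contains all vertices of some maximal geodesic of $G_{k,n}$.

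\textbf{Step 1 (boxes give geodesics).} Call $B=\{a_1,a_1'\}\times\cdots\times\{a_k,a_k'\}$ with $a_i\ne a_i'$ for all $i$ a \emph{box}. Suppose $B\subseteq T$. Then the path
\[
(a_1,\ldots,a_k),\ (a_1',a_2,\ldots,a_k),\ (a_1',a_2',a_3,\ldots,a_k),\ \ldots,\ (a_1',\ldots,a_k')
\]
lies in $B$, hence in $T$. Consecutive vertices differ in exactly one coordinate, and each coordinate changes exactly once. Its endpoints are at distance $k$, the diameter. So it is a geodesic of length $k$, and it is maximal because every maximal geodesic of $G_{k,n}$ has length $k$ (as noted in the preliminaries). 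Hence $T$ contains no box.

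\textbf{Step 2 (Erdős' box theorem).} Regard $T$ as the edge set of a $k$-partite $k$-uniform hypergraph with $k$ parts, each a copy of $[n]$. A box is exactly a copy of the complete $k$-partite hypergraph $K^{(k)}(2,\ldots,2)$. By Erdős' theorem on box-free hypergraphs, such a hypergraph has at most $C_k n^{k-2^{1-k}}$ edges. This gives $|T|\le O_k n^{k-2^{1-k}}$, which is exactly the claimed bound.

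If we want the argument self-contained, we prove this by induction on $k$. The case $k=1$ is trivial: a box-free $T\subseteq[n]$ has $|T|\le1$. For the inductive step, write $T=\bigcup_{a\in[n]}\{a\}\times T_a$ with $T_a\subseteq[n]^{k-1}$. A box in $T$ is the same as two distinct $a,a'$ such that $T_a\cap T_{a'}$ contains a $(k-1)$-dimensional box. By induction, for every pair $a\ne a'$ we get $|T_a\cap T_{a'}|\le C_{k-1}n^{k-1-2^{2-k}}$. Double counting pairs then gives
\[
\sum_{y\in[n]^{k-1}}\binom{d(y)}{2}=\sum_{a\ne a'}|T_a\cap T_{a'}|\le n^2C_{k-1}n^{k-1-2^{2-k}},
\]
where $d(y)=|\{a:(a,y)\in T\}|$. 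Since $\sum_y d(y)=|T|$, Cauchy--Schwarz over the $n^{k-1}$ points $y$ gives
\[
\sum_y\binom{d(y)}{2}\ge \frac{|T|^2}{2n^{k-1}}-\frac{|T|}{2}.
\]
Combining the two estimates yields $|T|^2\lesssim n^{k-1}\cdot n^{k+1-2^{2-k}}+|T|$. Hence $|T|\lesssim n^{k-2^{1-k}}$, because $\tfrac12(2k-2^{2-k})=k-2^{1-k}$.

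The only delicate point is this inductive step. The induction hypothesis must be applied to the family of pairwise intersections $T_a\cap T_{a'}$ rather than to $T$ itself. One must also make sure the lower-order term $|T|/2$ coming from $\binom{d}{2}$ is absorbed into the constant $O_k$, which holds since $|T|\le n^k$ makes it subdominant. Everything else (Step 1 and the translation back to $\gt$) is routine.
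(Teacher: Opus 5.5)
Your proposal is correct and follows the paper's proof essentially step for step. You pass to the complement of a $\gt$-set, observe that a box inside it yields a maximal geodesic (Lemma~\ref{2-lem-1}), and then bound box-free sets by induction on $k$: you use the slices $T_a$, double count $\sum_y\binom{d(y)}{2}$ against the pairwise intersections $T_a\cap T_{a'}$, and apply Cauchy--Schwarz (Lemma~\ref{2-lem-2}). One small slip: after clearing denominators the lower-order term is $|T|\,n^{k-1}$, not $|T|$. It is still absorbed, since $|T|\,n^{k-1}\le n^{2k-1}\le n^{2k-2^{2-k}}$ for $k\ge 2$; the paper instead splits into the cases $|A|\le 2n^{k-1}$ and $|A|>2n^{k-1}$.
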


Using Theorem~\ref{gt-ham}, we will give a negative answer to Problem~\ref{ques-2}.
We first prove Theorem~\ref{gt-ham}.
Let $S$ be a $\gt$-set of $G_{k,n}$, and write $$A:=V(G_{k,n})\setminus S.$$
Then no maximal geodesic is entirely contained in \(A\).
A \(k\)-dimensional binary coordinate block in \(G_{k,n}\) is a vertex set of the form
\begin{equation*}\label{2-eq-2}
\{a_1,b_1\}\times\{a_2,b_2\}\times\cdots\times\{a_k,b_k\}
\end{equation*}
where \(a_i,b_i\in [n]\) and \(a_i\ne b_i\) for every \(i\in\{1,\ldots,k\}\). Clearly, such a block contains \(2^k\) vertices and induces a subgraph isomorphic to the hypercube \(Q_k\).

\begin{lemma}\label{2-lem-1}
\(A\) contains no any \(k\)-dimensional binary coordinate block.
\end{lemma}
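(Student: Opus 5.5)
We argue by contradiction: assume that $A$ contains a block $B=\{a_1,b_1\}\times\cdots\times\{a_k,b_k\}$, and exhibit a maximal geodesic of $G_{k,n}$ all of whose vertices lie in $B\subseteq A$. That would contradict the fact that $S=V(G_{k,n})\setminus A$ is a geodesic transversal.

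For the construction, start at $u=(a_1,a_2,\ldots,a_k)$ and change the coordinates one at a time in the order $1,2,\ldots,k$, replacing $a_i$ by $b_i$ at step $i$. This gives the path
$$
u^{(0)}=u,\quad u^{(i)}=(b_1,\ldots,b_i,a_{i+1},\ldots,a_k),\quad 1\le i\le k .
$$
Each $u^{(i)}$ lies in $B$. Consecutive vertices differ in exactly one coordinate, so they are adjacent in $G_{k,n}$. Moreover $u^{(k)}=(b_1,\ldots,b_k)$ differs from $u$ in all $k$ coordinates, since $a_i\ne b_i$ for every $i$.

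Since distance in $G_{k,n}$ equals the number of differing coordinates, $\ds(u,u^{(k)})=k$. The path has length $k$, so it is a $u$--$u^{(k)}$ geodesic. It remains to check maximality. The diameter of $G_{k,n}$ is $k$, so no geodesic has length exceeding $k$. A geodesic properly containing this path would have length greater than $k$, which is impossible. Hence the path is a maximal geodesic. This matches the paper's preliminary remark that every maximal geodesic of $H(k,n)$ has length $k$, and conversely any geodesic of length $k$ is maximal.

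The path is therefore a maximal geodesic contained entirely in $A$, so it contains no vertex of $S$. This contradicts $S$ being a geodesic transversal, and the lemma follows.

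There is no serious obstacle here. The only point needing care is the maximality claim, and it follows directly from the diameter being $k$.
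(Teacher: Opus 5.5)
Your proof is correct and follows the paper's argument: you walk inside the block from $(a_1,\ldots,a_k)$ to $(b_1,\ldots,b_k)$ by switching one coordinate at a time, which gives a maximal geodesic lying entirely in $A$ and so avoiding $S$. Your justification of maximality (a length-$k$ geodesic cannot be properly extended because the diameter is $k$) supplies the step the paper simply asserts.
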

\proof
Suppose, for a contradiction, that \(A\) contains a \(k\)-dimensional binary coordinate block, say  $\{a_1,b_1\}\times\{a_2,b_2\}\times\cdots\times\{a_k,b_k\}$, where \(a_i,b_i\in [n]\) and \(a_i\ne b_i\) for every \(i\in\{1,\ldots,k\}\). Then, by changing the coordinates one at a time, in any order, we can obtain a maximal geodesic of length $k$ from $(a_1,a_2,\ldots,a_k)$ to $(b_1,b_2,\ldots,b_k)$. Clearly, every vertex in this maximal geodesic belongs to $A$, which contradicts the definition of  $A$.
$\qed$

\begin{lemma}\label{2-lem-2}
Let \(f_k(n)\) denote the maximum cardinality of a subset in  \([n]^k\) that contains no any \(k\)-dimensional binary coordinate block. Then, for any fixed integer $k\ge 1$, we have
\begin{equation}\label{2-eq-3}
f_k(n)= O_k\cdot n^{k-2^{1-k}},
\end{equation}
where $O_k$ is a positive constant depending only on $k$.
\end{lemma}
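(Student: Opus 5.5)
This is the Kővári--Sós--Turán / Erdős box-theorem argument: we induct on $k$, using Cauchy--Schwarz (convexity) at each step to double the exponent of the deficiency. We prove the slightly stronger statement $f_k(n)\le C_k n^{k-2^{1-k}}$ for all $n\ge 2$. Write $\varepsilon_k=2^{1-k}$, so $\varepsilon_1=1$ and $\varepsilon_{k+1}=\varepsilon_k/2$.

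For the base case $k=1$, a $1$-dimensional block is just a pair $\{a_1,b_1\}$ with $a_1\ne b_1$. A block-free subset of $[n]$ therefore has at most one element, and $f_1(n)\le 1=n^{1-1}$.

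For the inductive step $k\to k+1$, let $X\subseteq[n]^{k+1}$ contain no $(k+1)$-dimensional block. For each $t\in[n]$ define the slice $X_t=\{y\in[n]^k:(y,t)\in X\}$. If $t\ne t'$, then $X_t\cap X_{t'}$ contains no $k$-dimensional block, since otherwise that block times $\{t,t'\}$ would lie in $X$. Hence $|X_t\cap X_{t'}|\le f_k(n)$. Double counting pairs $(y,\{t,t'\})$ with $y\in X_t\cap X_{t'}$ gives
$$\sum_{y\in[n]^k}\binom{d_y}{2}=\sum_{\{t,t'\}}|X_t\cap X_{t'}|\le \binom{n}{2}f_k(n),$$
where $d_y=|\{t:(y,t)\in X\}|$ and $\sum_y d_y=|X|$.

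By convexity (Jensen on $\binom{x}{2}$ over the $N=n^k$ points $y$), with average $\bar d=|X|/n^k$,
$$n^k\binom{\bar d}{2}\le \binom n2 f_k(n)\le \tfrac{1}{2}n^2C_kn^{k-\varepsilon_k}.$$
This gives $\bar d(\bar d-1)\le C_k n^{2-\varepsilon_k}$, so $\bar d\le 1+\sqrt{C_k}\,n^{1-\varepsilon_k/2}$. Therefore
$$|X|\le n^k+\sqrt{C_k}\,n^{k+1-\varepsilon_{k+1}}\le (1+\sqrt{C_k})\,n^{(k+1)-2^{-k}},$$
using $k\le k+1-2^{-k}$. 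Setting $C_{k+1}=1+\sqrt{C_k}$ closes the induction.

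Lemma~\ref{2-lem-2} follows with $O_k=C_k$. If the intended meaning is a two-sided $\Theta$, only the upper bound is needed for Theorem~\ref{gt-ham}: combined with Lemma~\ref{2-lem-1} it gives $|A|\le f_k(n)$ and hence $\gt(G_{k,n})=n^k-|A|$ as stated.

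The step needing the most care is the convexity bookkeeping. We have to handle the $-1$ in $\binom{d}{2}$, i.e.\ the case $\bar d$ small, and use the strong form of the induction hypothesis, valid for all $n$, on the intersections $X_t\cap X_{t'}$, which live in the same $[n]^k$. Both are routine once $\bar d\le 1+\sqrt{\cdot}$ is written down as above.
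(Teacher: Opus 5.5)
Your proof is correct and follows the paper's argument essentially step for step: induction on $k$, slicing along one coordinate, noting that the intersection of two slices is block-free, double counting $\sum_y\binom{d_y}{2}$ against $\binom{n}{2}f_k(n)$, and closing with a convexity bound. The only differences are cosmetic: you apply Jensen directly to $\binom{x}{2}$ and solve $\bar d(\bar d-1)\le C_k n^{2-\varepsilon_k}$ (giving $C_{k+1}=1+\sqrt{C_k}$), whereas the paper uses Cauchy--Schwarz on $\sum d(x)^2$ and then splits into the cases $|A|\le 2n^{k-1}$ and $|A|>2n^{k-1}$.
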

\proof
We proceed by induction on $k$.
If $k=1$, then clearly, $f_1(n)=1$, and so \eqref{2-eq-3} holds whenever $O_1=1$, as desired.

In the following, suppose that $k\ge 2$ and \eqref{2-eq-3} holds for $k-1$. Let $A$ be a subset of $[n]^k$ that contains no any \(k\)-dimensional binary coordinate block. For any $i\in [n]$, define
$$
A_i=\{x\in [n]^{k-1}: (i,x)\in A\}.
$$
Furthermore, for any $x\in [n]^{k-1}$, define
$$
d(x)=|\{i\in [n]: x\in A_i\}|.
$$
Clearly, we have
\begin{equation}\label{2-eq-4}
\sum_{x\in [n]^{k-1}}d(x)=|A|.
\end{equation}

For distinct $i,j\in [n]$, we claim that $A_i\cap A_j$
contains no any binary coordinate block in $[n]^{k-1}$.
In fact, if $A_i\cap A_j$ contains a $(k-1)$-dimensional binary coordinate block, then we can extend it to a \(k\)-dimensional binary coordinate block by adjoining \(\{i,j\}\) as a new coordinate factor, which is impossible because this new \(k\)-dimensional binary coordinate block is contained in $A$. Thus, by the induction hypothesis,
there exists a positive constant $O_{k-1}$ depending only on $k$ such that
\begin{equation}\label{2-eq-5}
|A_i\cap A_j|\le O_{k-1}\cdot n^{k-1-2^{2-k}}.
\end{equation}
Also, for every $x\in [n]^{k-1}$, considering the number of
all pairs of sets \(A_i\) and \(A_j\) so that $x\in A_i\cap A_j$, we obtain
\begin{equation}\label{2-eq-6}
\sum_{1\le i < j \le n}|A_i\cap A_j|
=\sum_{x\in [n]^{k-1}}\binom{d(x)}{2}.
\end{equation}
Now by \eqref{2-eq-4} and the Cauchy-Schwarz inequality, we deduce
$$
\sum_{x\in [n]^{k-1}}d(x)^2\ge \frac{\big(\sum_{x\in [n]^{k-1}}d(x)\big)^2}{n^{k-1}}=\frac{|A|^2}{n^{k-1}}.
$$
Note that $\binom{m}{2}=\frac{m^2-m}{2}$ for every positive integer $m$. It follows that
\begin{equation}\label{2-eq-7}
\sum_{x\in [n]^{k-1}}\binom{d(x)}{2}
=\frac{1}{2}\left(\sum_{x\in [n]^{k-1}}d(x)^2-\sum_{x\in [n]^{k-1}}d(x)\right)\ge \frac{1}{2}\left(\frac{|A|^2}{n^{k-1}}-|A| \right).
\end{equation}
Moreover, by \eqref{2-eq-5}, \eqref{2-eq-6}, and $\binom{n}{2}\le \frac{n^2}{2}$, we conclude
\begin{equation}\label{2-eq-8}
\sum_{x\in [n]^{k-1}}\binom{d(x)}{2}\le
\binom{n}{2}O_{k-1}n^{k-1-2^{2-k}}\le
\frac{O_{k-1}}{2}n^{k+1-2^{2-k}}.
\end{equation}
Combining \eqref{2-eq-7} and \eqref{2-eq-8}, we deduce
\begin{equation}\label{2-eq-9}
|A|^2\le |A|n^{k-1}+O_{k-1}n^{2k-2^{2-k}}.
\end{equation}

If $|A|\le 2n^{k-1}$, then
$
|A|\le 2n^{k-2^{1-k}}
$
by $k\ge 2$, as desired.
Finally, we may assume that $|A|> 2n^{k-1}$. Then $|A|n^{k-1}<|A|^2/2$. It follows from \eqref{2-eq-9} that $$|A|^2/2\le O_{k-1}n^{2k-2^{2-k}},$$
which is equivalent to
$$
|A|\le \sqrt{2O_{k-1}}n^{k-2^{1-k}}.
$$
This proof is now complete.
$\qed$

Now, by Lemmas~\ref{2-lem-1} and \ref{2-lem-2}, we have
$
|A|\le O_k\cdot n^{k-2^{1-k}}.
$
Thus, Theorem~\ref{gt-ham} follows from the fact $|S|=n^k-|A|$.

Next, we give a negative answer to Problem~\ref{ques-2}.
We first consider $\gpack(G_{k,n})$. It is clear that
the number of vertices in every geodesic packing of $G_{k,n}$ must be a multiple of $k+1$. This gives an upper bound for $\gpack(G_{k,n})$, that is
\begin{equation}\label{2-eq-1}
\gpack(G_{k,n})\le \left\lfloor \frac{n^k}{k+1} \right\rfloor \le \frac{n^k}{k+1}.
\end{equation}
Now, combining \eqref{2-eq-1} and \eqref{2-eq-10}, we deduce
$$
\frac{\gt(G_{k,n})}{\gpack(G_{k,n})}\ge
\frac{n^k-O_k\cdot n^{k-2^{1-k}}}{n^k/(k+1)}=
(k+1)(1-O_k\cdot n^{-2^{1-k}})\longrightarrow k+1.
$$
Thus, for a absolute constant $C$, we may take positive integer $k$ with $k+1>C$. Then, for sufficiently large \(n\), we have
$$
\frac{\gt(G_{k,n})}{\gpack(G_{k,n})}>C.
$$
Namely, there exists a graph $G$ such that
$$
\sup_{G\in\mathcal G}
\frac{\gt(G)}{\gpack(G)}
=\infty,
$$
where \(\mathcal G\) denotes the class of all graphs,
which gives a negative answer to Problem~\ref{ques-2}.
\section*{Conflict of interest}
%%%%%%%%%%%%%%%%%%%%%%%%%%%%%%%%%%%%%%%%%%%%%%%%%%%%%%%%
The authors state no conflict of interest.
%%%%%%%%%%%%%%%%%%%%%%%%%%%%%%%%%%%%%%%%%%%%%%%%%%%%%%%%
%%%%%%%%%%%%%%%%%%%%%%%%%%%%%%%%%%%%%%%%%%%%%%%%%%%%%%%%
%\section*{Acknowledgments}
%We are grateful to the referees for many detailed comments and %thoughtful suggestions that have helped improve this paper substantially.

%%%%%%%%%%%%%%%%%%%%%%%%%%%%%%%%%%%%%%%%%%%%%%%%%%%%%%%%

\begin{thebibliography}{99}
%%%%%%%%%%%%%%%%%%%%%%%%%%%%%%%%%%%%%%%%%%%%%%%%%%%%%%%
\bibitem{Ca} J. C\'aceres, C. Hernando, M. Mora, I. M. Pelayo, M. L. Puertas, C. Seara, D. R. Wood,
On the metric dimension of Cartesian products of graphs,
{\it SIAM J. Discrete Math.}, \textbf{21} (2007), 423--441.


\bibitem{Ham} F. Harary, R. A. Melter, On the metric dimension of a graph, {\it Ars Combin.}, \textbf{2} (1976), 191--195.

\bibitem{Hua} H. Hua, Y. Chen,
X. Hua, Further results on the mixed metric dimension of graphs, {\it Discrete Appl. Math.}, \textbf{367} (2025),  99--106.

\bibitem{KeK} A. Kelenc, D. Kuziak, A. Taranenko, I. G. Yero, Mixed metric dimension of graphs, {\it Appl. Math. Comput.} \textbf{314} (2017), 429--438.

\bibitem{KeT} A. Kelenc, A. T. M. Toshi, R. \v{S}krekovski,
I. G. Yero, On metric dimensions of hypercubes, {\it Ars Math. Contemp.}, \textbf{23} (2023), \#P2.08.

\bibitem{Man} P. Manuel, B. Bre\v{s}ar, S. Klav\v{z}ar,
The geodesic-transversal problem,
{\it Appl. Math. Comput.}, \textbf{413} (2022), 126621.

\bibitem{Mang} P. Manuel, B. Bre\v{s}ar, S. Klav\v{z}ar,
Geodesic packing in graphs,
{\it Appl. Math. Comput.}, \textbf{459} (2023), 128277.

\bibitem{Pe} I. Peterin, G. Semani\v{s}in, On maximal shortest paths cover, {\it Mathematics}, \textbf{9} (2021), 1592.

\bibitem{PeS} I. Peterin, G. Semani\v{s}in, Geodesic transversal problem for join and lexicographic product of graphs, {\it Comput. Appl. Math.}, \textbf{41} (2022), 128.

\bibitem{Sl} P. J. Slater, Leaves of trees, {\it Congr. Numer.}, \textbf{14} (1975), 549--559.


\bibitem{Ti} R. C. Tillquist, R. M. Frongillo, M. E. Lladser, Getting the lay of the land in discrete space: a survey of metric dimension and its applications,
    {\it SIAM Rev.}, \textbf{65} (2023), 919--962.


%%%%%%%%%%%%%%%%%%%%%%%%%%%%%%%%%%%%%%%%%%%
%%%%%%%%%%%%%%%%%%%%%%%%%%%%%%%%%%%%%%%%%%
%%%%%%%%%%%%%%%%%%%%%%%%%%%%%%%%%%%%%%%%%%
%%%%%%%%%%%%%%%%%%%%%%%%%%%%%%%%%%%%%%%%%%
%%%%%%%%%%%%%%%%%%%%%%%%%%%%%%%%%%%%%%%%
\end{thebibliography}
\end{document}